\newcommand\QQ{\mathbb{Q}}
\newcommand\CC{\mathbb{C}}
\newcommand\PP{\mathbb{P}}
\newcommand\RR{\mathbb{R}}
\numberwithin{equation}{section}
\theoremstyle{definition}
\newtheorem{theorem}[subsection]{Theorem}
\newtheorem{proposition}[subsection]{Proposition}
\newtheorem*{fact}{Fact}
\newtheorem{corollary}[subsection]{Corollary}
\newtheorem{lemma}[subsection]{Lemma}
\newtheorem*{conjecture}{Conjecture}
\newtheorem{remark}[subsection]{Remark}
\DeclareMathOperator{\Td}{Td}
\title{Almost complex manifolds with total Betti number three}
\author{Jiahao Hu}
\address{Stony Brook University, Department of Mathematics, 100 Nicolls Road, 11794 Stony Brook}
\email{jiahao.hu@stonybrook.edu}
\begin{document}

\maketitle

\begin{abstract}
    We show the minimal total Betti number of a closed almost complex manifold of dimension $2n\ge 8$ is four, thus confirming a conjecture of Sullivan except for dimension $6$. Along the way, we prove the only simply connected closed complex manifold having total Betti number three is the complex projective plane.
\end{abstract}

\section{Introduction}
The group $Sp(2n,\RR)$ of symplectic transformations and the group $GL(n,\CC)$ of complex linear transformations of a $2n$--dimensional real vector space share a common maximal compact subgroup---the unitary group $U(n)$. Consequently smooth manifolds admitting \textit{almost symplectic} structures are the same as those admitting \textit{almost complex} structures. An almost symplectic structure is a non-degenerate two form $\omega$, while an almost complex structure is an automorphism $J$ of the tangent bundle so that $J^{2}=-id$. 
With any given Riemannian metric, $\omega$ and $J$ generates one another in view of $Sp(2n,\RR)\cap O(2n)=GL(n,\CC)\cap O(2n)=U(n)$. Two different integrability conditions are usually imposed to remove the word \textit{almost}. They are: the \textit{symplectic} condition $d\omega=0$ yielding a local Darboux chart, or the \textit{complex} condition that the Nijenhaus tensor $N_J=0$ yielding a local holomorphic chart.

The symplectic condition is directly reflected in the topology of closed manifolds: the de Rham cohomology class of the symplectic form $\omega$ generates a subring that is isomorphic to the de Rham cohomology of $\CC\PP^n$. This implies:
\begin{fact}
	The minimal total Betti number (i.e. the sum of all Betti numbers) of a $2n$--dimensional closed \textit{symplectic} manifold is $n+1$.
\end{fact}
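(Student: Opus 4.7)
The plan is to exploit the cohomological consequence of $\omega$ being both closed and non-degenerate, which forces every even-degree Betti number from $0$ up to $2n$ to be positive; the lower bound then follows by summation, and sharpness follows from $\CC\PP^n$.

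I would start from non-degeneracy: since $\omega$ is non-degenerate, the top exterior power $\omega^n$ is a nowhere-vanishing $2n$-form, hence a volume form. In particular $M$ is orientable, and $\int_M \omega^n \neq 0$. Because $d\omega = 0$ we also have $d(\omega^n)=0$, so Stokes' theorem tells us this integral depends only on the de Rham class $[\omega^n] = [\omega]^n \in H^{2n}_{dR}(M;\RR)$, which must therefore be non-zero.

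Next I would propagate this non-vanishing downward through the ring structure. If $[\omega]^k$ were zero in $H^{2k}(M;\RR)$ for some $0 \le k \le n$, then $[\omega]^n = [\omega]^k \cdot [\omega]^{n-k}$ would also vanish, contradicting the previous step. Hence $[\omega]^k$ is a non-zero class for every $0 \le k \le n$, giving $b_{2k}(M) \ge 1$. Summing yields total Betti number at least $n+1$. For sharpness, $\CC\PP^n$ equipped with its Fubini--Study form realises the bound exactly: its Betti numbers are $b_{2k}=1$ for $0\le k\le n$ and vanish in odd degrees.

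There is no serious obstacle here; the content is the classical fact that a symplectic class is cohomologically non-trivial in every power up to the top. The only delicate point, worth flagging, is that the argument genuinely needs $d\omega=0$ (not merely that $\omega^n$ be closed), since this is what allows the non-vanishing of $\int_M \omega^n$ to be transported from the level of forms to a statement about cohomology classes and their cup products.
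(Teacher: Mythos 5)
Your proof is correct and follows exactly the route the paper indicates: the closed non-degenerate form $\omega$ has $[\omega]^n\neq 0$ in top cohomology, hence $[\omega]^k\neq 0$ for all $0\le k\le n$, so the subring generated by $[\omega]$ is a copy of $H^*(\CC\PP^n;\RR)$, forcing $b_{2k}\ge 1$ and total Betti number $\ge n+1$, with sharpness witnessed by $\CC\PP^n$. Nothing to add.
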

In contrast, however, the topological consequence of the complex condition is widely unknown, except in dimension two and four, featuring the work of Riemann and Kodaira. Motivated by that the complex realm \textit{should} be mirror to the symplectic realm, Dennis Sullivan proposes to study the minimal total Betti numbers of complex manifolds with respect to their dimensions. In mirror correspondence to the above fact, he formulated:
\begin{conjecture}[Sullivan]
\footnote{According to Sullivan, he was inspired by how information is beautifully organized through graphs in fluid dynamics, e.g. K41, and observing the minimal total Betti numbers of symplectic manifolds grow linearly with respect to dimension, fitting into a nice linear graph. This conjecture thus gives the corresponding graph for complex manifolds a particularly nice form: first grows linearly and then flats out. See also \cite[Fig. 1]{AM19}.}
The minimal total Betti number of a $2n$--dimensional closed \textit{complex} manifold is $\min\{n+1,4\}$.
\end{conjecture}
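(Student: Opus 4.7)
The plan is to treat the upper and lower bounds on the minimal total Betti number separately.

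For the upper bound, I would exhibit closed complex manifolds realizing the claimed minimum. In each complex dimension $n\le 3$, the projective space $\CC\PP^n$ has total Betti number $n+1$. For every $n\ge 2$, a Hopf manifold $(\CC^n\setminus\{0\})/\lambda^{\mathbb{Z}}$ with $|\lambda|\ne 1$ is diffeomorphic to $S^1\times S^{2n-1}$ and thus has total Betti number $4$. Together these realize $\min\{n+1,4\}$ in every dimension.

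For the lower bound, Poincar\'e duality together with the canonical orientation of a complex manifold forces severe constraints on any $M^{2n}$ of total Betti number at most $3$. Total Betti $2$ means $M$ is a rational homology sphere, while total Betti $3$ forces $b_n=1$ and all other intermediate Betti numbers to vanish. In the latter case, if $n$ is odd the nondegenerate intersection form on the one-dimensional $H^n(M;\QQ)$ would be skew-symmetric, an impossibility; hence total Betti $3$ requires $n$ even. The conjecture therefore reduces to excluding these two configurations whenever they violate $\min\{n+1,4\}$.

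The central tool I would use is the Hirzebruch--Riemann--Roch theorem applied to the structure sheaf and to various line-bundle twists, combined with the Hirzebruch signature formula. On a rational homology sphere every Chern class below the top vanishes rationally, so the top-degree Todd polynomial collapses to $\alpha_n\, c_n[M]=2\alpha_n$, where $\alpha_n$ is the coefficient of the monomial $c_n$ in $\Td_n$. Computing $\alpha_n$ in terms of Bernoulli numbers and invoking integrality of the Todd genus rules out rational homology spheres carrying an almost complex structure for every $n$ other than the classical exceptions $n=1,3$, essentially Borel and Serre's criterion. For total Betti $3$ with $n$ even, the Chern classes are rationally concentrated in degrees $n$ and $2n$, so the Todd genus becomes a linear combination of $c_n[M]=3$ and $c_{n/2}^2[M]$; combining this with the signature formula and with integrality conditions from HRR applied to a range of rationally nontrivial line-bundle twists should exclude this case in each even dimension $n\ge 4$, through a number-theoretic analysis of the Bernoulli denominators that appear.

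The step I expect to be the main obstacle is the case of a rational homology $6$-sphere, i.e.\ total Betti $2$ in complex dimension $3$. Here $\alpha_3=0$, so the Todd genus, and for the same reason every classical Chern-number congruence, becomes vacuous. This case is intertwined with the longstanding and still-open question of whether $S^6$ admits a complex structure; to make progress in dimension $6$ one would likely have to exploit genuinely complex-analytic input (for instance the obstruction to K\"ahler forms on a rational homology sphere, or information extracted from the Fr\"olicher spectral sequence) rather than rely on purely almost-complex invariants.
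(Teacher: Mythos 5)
Your overall architecture is the right one and largely matches the paper's: realize the upper bound by $\CC\PP^n$ and Hopf manifolds, and for the lower bound reduce to excluding total Betti number $2$ (rational homology spheres, handled as in \cite[Theorem 2.2]{AM19}) and total Betti number $3$ (which forces $n$ even, $\chi=3$, $\sigma=\pm1$) via integrality of the Todd genus. You are also correct, and candid, about the irreducible obstacle: the rational homology $6$-sphere case is genuinely open, and for exactly this reason the paper itself only confirms the conjecture for $n\ne 3$. So no complete proof of the stated conjecture exists, and your proposal rightly does not claim one.

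The genuine gap is in your mechanism for the hard case (total Betti number $3$, $n$ even, $n\ge 4$). On such a manifold $H^2(M;\QQ)=0$ once $n>2$, so every line bundle has torsion first Chern class: there are no ``rationally nontrivial line-bundle twists,'' and Hirzebruch--Riemann--Roch applied to any line bundle returns exactly the same integrality condition as the untwisted Todd genus. That single congruence constrains the one unknown Chern number $\int_M c_{n/2}^2$ but cannot by itself yield a contradiction, since for each dimension it is satisfied by an arithmetic progression of values. The missing ingredient is quantitative rather than purely arithmetic. Because $c_1$ is torsion, \Cref{a=td} gives $\Td(M)=\hat{A}(M)$, and expressing $\Td$, $\sigma$ and $\chi$ in the two surviving Chern numbers produces the linear relation $C_{\Td}\,\Td(M)=C_\sigma\,\sigma(M)-C_\chi\,\chi(M)$ of \Cref{lem3}, in which $C_{\Td}$ grows like $2^{8k}$ while $C_\sigma$ and $C_\chi$ remain bounded (\Cref{estimation}, \Cref{cor1}). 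Since the total Betti number bounds $|\sigma|$ and $|\chi|$ by $3$, integrality of $\Td(M)$ forces $\Td(M)=0$, and then $C_\sigma\sigma=C_\chi\chi$ fails because $C_\sigma/C_\chi\ge 3^k$; this is \Cref{bettibig}. Without this uniform-in-$k$ size comparison---or an alternative such as the Hattori--Stong divisibility of $\int_M c_{2k}^2$ by $[(2k-1)!]^2$ used in Su's argument (\Cref{Su})---``a number-theoretic analysis of the Bernoulli denominators'' does not close the argument. You would also need to handle dimensions $2n\equiv 4\pmod 8$ separately (there only $p_{n/2}[M]$ survives and \Cref{lem1} applies directly), or else invoke \cite{AM19} to reduce to dimensions $8k$ as the paper does.
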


The object of this paper is to confirm this conjecture except when $n$ is $3$. We note total Betti number $4$ is achieved by Hopf and Calabi-Eckmann manifolds in all even dimensions. Also a confirmation for $n=3$ will imply $S^6$ admits no complex structure.

It turns out quite surprisingly, even though the conjecture is made for \textit{complex} manifolds, Albanese and Milivojevi\'c \cite{AM19} proved this statement in fact holds in the \textit{almost complex} category when $n$ is neither $3$ nor of the form $2^l$ for $l\ge 10$. Our result below settles the latter case they left out.

\begin{restatable}{theorem}{minimalbetti}\label{minimalbetti}
Let $M$ be a closed \textit{almost complex} manifold of dimension $2n\ge 8$. Then the total Betti number of $M$ is $\ge 4$.
\end{restatable}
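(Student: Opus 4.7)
The plan is to derive a contradiction from the assumption that $M$ has total Betti number at most three. By \cite{AM19} it is enough to handle the remaining case $n = 2^{l}$ with $l \ge 10$, so I focus on this. Orientability of $M$ forces $b_0 = b_{2n} = 1$; the case of total Betti number two makes $M$ a rational homology sphere, already excluded in dimension $2n \ge 8$ by standard Chern-number integrality arguments. Assuming total Betti number equal to three, Poincar\'e duality forces $b_n = 1$ and all other Betti numbers to vanish, so $\chi(M) = 2 + (-1)^n = 3$ since $n = 2^l$ is even. Writing $m = n/2$, the rational cohomology of $M$ is concentrated in degrees $0, n, 2n$, so the only Chern classes $c_i(M)$ that can be nonzero rationally are $c_m$ and $c_n$. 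We have $c_n[M] = 3$; I set $a := c_m^2[M] \in \mathbb{Z}$, and writing $c_m = k v$ for $v$ a generator of $H^n(M;\mathbb{Z})/\mathrm{tors}$, unimodularity of the intersection form gives $a = \pm k^2$.

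Next I would extract constraints from integrality of characteristic numbers. On any almost complex manifold, the Hirzebruch $\chi_y$-genus
\[
\chi_y(M) \;=\; \int_M \prod_{i=1}^n \frac{x_i\,(1 + y e^{-x_i})}{1 - e^{-x_i}}
\]
has as coefficient of $y^p$ the index $\chi^p(M)$ of the Spin$^{c}$ Dirac operator twisted by $\Lambda^p T^*M$, hence an integer. In our setting Newton's identities reduce the power sums $s_k = \sum x_i^k$ modulo torsion to $s_m = (-1)^{m-1} m\, c_m$, $s_n = m\, c_m^2 - n\, c_n$, and $s_k = 0$ otherwise; expanding the integrand through $\sum_i \log(f_y(x_i)/(1+y))$ then shows that the degree-$n$ component involves only $s_m^2$ and $s_n$. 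Consequently
\[
\chi_y(M) \;=\; A(y)\,a \,+\, 3\,B(y)
\]
for explicit rational functions $A(y), B(y)$ with Bernoulli-number coefficients. The signature theorem $\sigma(M) = L_{n/2}[M] = \pm 1$ contributes a second explicit linear equation in $a$. Integrality of all $\chi^p(M)$ together with the signature equation then yields a system of Diophantine constraints on the single integer unknown $a$.

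The final step, and main obstacle, is to show that for $n = 2^{l}$ with $l \ge 10$ no integer $a$ satisfies this system. The key phenomenon is that the Bernoulli numbers $B_{2k}$ have denominators whose $2$-adic valuations are controlled by von Staudt--Clausen, and for $n$ a large power of $2$ the coefficients of $A(y), B(y)$ and of $L_{n/2}$ are dominated by $2$-adic factors. I would therefore work $2$-adically: isolate a short integer combination of the constraints---a specific $\chi^p$ or a pair of them, perhaps together with the signature---whose denominator has $2$-adic valuation growing with $l$ while its numerator is $2$-adically controlled, and argue that integrality forces $v_2(a)$ to exceed any fixed bound, in contradiction with $|a| = k^2$ being bounded through the signature equation. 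Making this estimate uniform in $l \ge 10$, either via a closed-form generating-function identity that tracks the relevant Bernoulli valuations, or via a clever choice of twisting bundle $E$ for which $\int \Td(TM)\,\mathrm{ch}(E)$ simplifies on manifolds with our restrictive rational cohomology, is where I expect the bulk of the technical work to lie.
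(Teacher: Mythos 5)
Your setup is correct and matches the paper's: reduce via \cite{AM19} to $\dim M = 2n$ with $n=2^l$, note that total Betti number three forces $\chi(M)=3$, $|\sigma(M)|\le b_n=1$, and that the only rationally nonzero Chern numbers are $a=\int_M c_{n/2}^2$ and $\int_M c_n=3$. But the proof stops exactly where the actual difficulty begins. Everything after ``Integrality of all $\chi^p(M)$ together with the signature equation then yields a system of Diophantine constraints'' is a research plan, not an argument: you do not exhibit the combination of constraints whose failure of integrality gives the contradiction, and you yourself flag that making the $2$-adic estimate uniform in $l$ is ``where I expect the bulk of the technical work to lie.'' That unexecuted step \emph{is} the theorem. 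Moreover, the $2$-adic route you sketch is essentially Su's independent proof (see \Cref{Su}), and as stated there it requires an additional input you have not identified: the Hattori--Stong relations force $\int_M c_{n/2}^2$ to be divisible by $[(n/2-1)!]^2$, and it is this extra divisibility, combined with the $2$-adic analysis of the signature equation, that closes the argument. Without it, von Staudt--Clausen control of Bernoulli denominators alone is not obviously enough to rule out every integer $a$.

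The paper's own proof avoids $p$-adic analysis entirely by an archimedean size comparison, which is worth internalizing because it is short and uniform in $k$. Writing $\dim M = 8k$, one eliminates $\int_M c_{2k}^2$ between the signature formula and the Todd genus formula (using $\hat A = e^{c_1/2}\Td$ and the vanishing of all Chern numbers involving $c_1$) to obtain a single linear identity
\[
C_{\Td}\cdot \Td(M)=C_\sigma\cdot \sigma(M)-C_{\chi}\cdot \chi(M),
\]
with explicit coefficients satisfying $C_{\Td}\ge \tfrac{3}{5}2^{8k-3}$, $1<C_\sigma<\tfrac32$, $0<C_\chi<8(2/\pi)^{4k}$. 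Since $\Td(M)$ is an integer and the right-hand side is bounded by $C_\sigma+3C_\chi$, one gets $\Td(M)=0$; then $\sigma(M)\ne 0$ forces $|\chi(M)|=(C_\sigma/C_\chi)|\sigma(M)|\ge 3^k>3$, and $\sigma(M)=\Td(M)=0$ forces all Chern numbers (hence $\chi$) to vanish --- either way contradicting $\chi(M)=3$. If you want to salvage your approach, you would need to either import the Hattori--Stong divisibility and carry out the $2$-adic computation in full, or replace it with a size estimate of this kind.
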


By Poincar\'e duality, the total Betti number of a (nonempty positive dimensional) closed orientable manifold is at least two. In \cite{AM19} it is shown (positive dimensional) almost complex manifolds having total Betti number exactly two must live in dimension $2$ and $6$. \Cref{minimalbetti} is therefore achieved by studying almost complex manifolds with total Betti number $3$. Our main theorem in this direction is:
\begin{restatable}{theorem}{cobordism}\label{cobordism}
Let $M$ be a positive dimensional closed \textit{almost complex} manifold whose total Betti number is $3$. Then $\dim M=4$ and $M$ is complex cobordant to $\CC\PP^2$.
\end{restatable}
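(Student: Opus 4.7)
The plan is to reduce the entire Chern-number profile of $M$ to a single integer parameter and then use index-theoretic integrality to kill every possibility except $\dim M=4$.

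First I would pin down the cohomology. Writing $\dim M=2n$ and applying Poincar\'e duality, the hypothesis $\sum_i b_i=3$ forces $b_0=b_n=b_{2n}=1$ with all other Betti numbers vanishing, since a nontrivial class in degree $k\ne 0,n,2n$ would pair with one in degree $2n-k$ and already push the total to at least four. Graded commutativity then excludes odd $n$: the pairing $H^n\otimes H^n\to H^{2n}$ is nondegenerate on a one-dimensional space and hence cannot be alternating. Thus $H^*(M;\QQ)\cong \QQ[\alpha]/(\alpha^3)$ with $|\alpha|=n$ even and $\alpha^2[M]=\sigma(M)\in\{\pm 1\}$. Since $c_k(M)\in H^{2k}(M;\QQ)$ is rationally nonzero only for $2k\in\{0,n,2n\}$, only $c_{n/2}$ and $c_n$ survive; writing $c_{n/2}=a\alpha$ with $a\in\mathbb{Z}$ gives $c_n[M]=\chi(M)=3$ and $c_{n/2}^2[M]=\sigma a^2$, so every Chern number of $M$ is a universal polynomial in the two integers $\sigma\in\{\pm 1\}$ and $a^2\ge 0$.

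In dimension four the conclusion follows immediately. The Hirzebruch signature formula $3\sigma=c_1^2[M]-2c_2[M]=\sigma a^2-6$ gives $a^2=3\sigma+6$; only $\sigma=+1$ yields an integer square, forcing $a=\pm 3$ and so $c_1^2[M]=9=c_1^2[\CC\PP^2]$, $c_2[M]=3=c_2[\CC\PP^2]$. Since closed almost complex manifolds are complex cobordant iff their Chern numbers coincide, $M$ is complex cobordant to $\CC\PP^2$.

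The principal obstacle is to rule out all even $n\ge 4$. The Atiyah-Singer index theorem supplies, for each $0\le p\le n$, an integer $\chi_p[M]$ which the Hirzebruch $T_y$-polynomial expresses as a specific $\QQ$-linear combination of $\sigma a^2$ and $3$, and Serre-type duality enforces $\chi_p[M]=\chi_{n-p}[M]$. Because $p_k(M)$ can be rationally nonzero only for $k\in\{n/4,n/2\}$, the signature formula and the Todd integrality $\Td[M]\in\mathbb{Z}$ each become one-variable Diophantine conditions on $a^2$. A direct computation handles $n=4$: the signature equation alone forces $\sigma=+1$ and $a^2=1$ (the alternative $\sigma=-1$ would demand $a^2=29$, not a square), but then the $y$-coefficient of $\chi_y[M]$ works out to $-\tfrac12$, violating $\chi_1[M]\in\mathbb{Z}$; for $n=8$ the signature equation already yields $a^2\notin\mathbb{Z}$ for either sign of $\sigma$. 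Handling every even $n\ge 4$ uniformly---and in particular the dimensions $n=2^l$ with $l\ge 10$ left open by \cite{AM19}---is where the real work lies: I anticipate a Bernoulli-number denominator estimate on the $T_y$-coefficients showing that, once $n$ is large, the denominators cannot be cleared by the tiny numerator $c_n[M]=3$ together with the single free parameter $a^2$, so no choice of $(\sigma,a^2)$ can make every $\chi_p[M]$ integral.
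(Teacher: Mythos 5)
Your setup is sound: Poincar\'e duality and connectedness force $b_0=b_n=b_{2n}=1$, graded commutativity excludes odd $n$, and the only possibly nonzero Chern numbers are $c_{n/2}^2[M]$ and $c_n[M]=\chi(M)=3$. Your dimension-four argument is essentially the paper's (the paper gets $\sigma\equiv 1\bmod 4$ from $\sigma+3=4\Td$ rather than from a square condition, which sidesteps the need to justify that $c_1$ is an integer multiple of a class of square $\pm1$ -- your $a\in\mathbb{Z}$ claim implicitly uses unimodularity of the integral intersection form, not just rational duality). Your spot checks for $n=4$ and $n=8$ are correct: in dimension $8$ one indeed gets $\chi^1=(\sigma-\chi)/4=-1/2$, and in dimension $16$ the signature equation has no integral solution.

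The genuine gap is that the entire content of the theorem beyond what \cite{AM19} already proved -- namely, excluding the dimensions $2^{l+1}$ with $l\ge 10$ -- is left as something you ``anticipate.'' A Bernoulli-denominator heuristic is not a proof, and it is not clear it would close: as your own $n=4$ computation shows, $\Td(M)\in\mathbb{Z}$ can be satisfied (there $\Td=0$), so one must either bring in further integral genera for every $n$ or find a quantitative mechanism. The paper's mechanism is different from what you sketch and worth noting: after citing \cite{AM19} to reduce to $\dim M=4$ or $\dim M=8k$ with $k\ge 2$, it derives the exact identity $C_{\Td}\Td(M)=C_\sigma\sigma(M)-C_\chi\chi(M)$ (using that all Chern numbers involving $c_1$ vanish, so $\hat A=\Td$ by \Cref{a=td}), and then proves the growth estimates $C_{\Td}/(C_\sigma+C_\chi)>4^k$ and $C_\sigma/C_\chi\ge 3^k$ via elementary $\zeta$-value bounds. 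With $|\sigma|=1$ and $\chi=3$, integrality of $\Td$ then forces $\Td=0$ and subsequently $\sigma=0$, a contradiction; no $\chi_y$-integrality beyond $p=0$ is needed, and the ``one free parameter'' $c_{2k}^2[M]$ is eliminated rather than bounded. Until you supply an actual estimate of this kind (or an equivalent Diophantine argument valid for all $k\ge 2$), the proposal does not prove the statement.
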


As consequences of \Cref{cobordism}, we immediately obtain:
\begin{theorem}\label{homeomorphism}
Let $M$ be a simply connected closed \textit{almost complex} manifold whose total Betti number is $3$. Then $M$ is homeomorphic to $\CC\PP^2$.
\end{theorem}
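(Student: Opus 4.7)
The plan is to reduce to Freedman's classification of simply connected closed topological $4$--manifolds. By \Cref{cobordism}, $M$ has real dimension $4$ and is complex cobordant (hence oriented cobordant) to $\CC\PP^{2}$. Since $M$ is simply connected, $H_{1}(M;\mathbb{Z})=0$ and the universal coefficient theorem gives $H^{2}(M;\mathbb{Z})$ torsion free. Combined with Poincar\'e duality and the hypothesis that the total Betti number equals $3$, the Betti numbers of $M$ are forced to be $b_{0}=b_{2}=b_{4}=1$, $b_{1}=b_{3}=0$. Hence $H^{2}(M;\mathbb{Z})\cong\mathbb{Z}$.

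The next step is to pin down the intersection form. On a rank--one torsion free abelian group a unimodular symmetric bilinear form is $(\pm 1)$, so it suffices to compute the signature of $M$. Because signature is an oriented cobordism invariant and complex cobordism refines oriented cobordism, the cobordism from $\CC\PP^{2}$ furnished by \Cref{cobordism} gives
\[
\sigma(M)=\sigma(\CC\PP^{2})=+1.
\]
Therefore the intersection form of $M$ is the positive definite form $(+1)$, i.e.\ the same as the intersection form of $\CC\PP^{2}$.

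Finally, invoke Freedman's theorem: a closed simply connected topological $4$--manifold is determined up to homeomorphism by its intersection form together with its Kirby--Siebenmann invariant. Since $M$ is almost complex it is in particular smooth, so its Kirby--Siebenmann invariant vanishes, matching that of $\CC\PP^{2}$. Thus $M$ is homeomorphic to $\CC\PP^{2}$.

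The real content is carried by \Cref{cobordism}; once one has dimension $4$ and oriented cobordism to $\CC\PP^{2}$, the rest is standard. The only subtle point to be careful about in the write--up is the torsion freeness of $H^{2}(M;\mathbb{Z})$, which is where simple connectivity (as opposed to just $b_{1}=0$) is actually used so that the intersection form genuinely lives on $\mathbb{Z}$ rather than on a rank one group with possibly trivial cup product into a torsion summand.
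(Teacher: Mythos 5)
Your proof is correct and follows essentially the same route as the paper: reduce to dimension $4$ via \Cref{cobordism}, use simple connectivity plus the universal coefficient theorem and Poincar\'e duality to get $H^2(M;\mathbb{Z})\cong\mathbb{Z}$, compute $\sigma(M)=1$ by cobordism invariance, and invoke Freedman. Your explicit handling of the Kirby--Siebenmann invariant is a welcome clarification of the appeal to Freedman's theorem, but it is not a different argument.
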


\begin{theorem}\label{biholomorphism}
Let $M$ be a simply connected closed \textit{complex} manifold whose total Betti number is $3$. Then $M$ is biholomorphic to $\CC\PP^2$.
\end{theorem}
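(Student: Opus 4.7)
The plan is to reduce the statement to the classical theorem of Yau that every compact complex surface homeomorphic to $\CC\PP^2$ is biholomorphic to $\CC\PP^2$, obtained as a consequence of his resolution of the Calabi conjecture. By \Cref{cobordism}, $M$ has real dimension $4$, so it is a compact complex surface, and by \Cref{homeomorphism}, $M$ is homeomorphic to $\CC\PP^2$. What remains is precisely the classical theorem above, which I would invoke as a black box; for orientation, let me outline how its three standard ingredients combine.

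The first step is to establish that $M$ is K\"ahler. Since $M$ is homeomorphic to $\CC\PP^2$, we have $b_1(M)=0$, which is even, so the theorem of Buchdahl and Lamari (completing earlier results of Siu and Todorov) guarantees that $M$ admits a K\"ahler metric. With $M$ K\"ahler, I would compute the Chern numbers purely from the topology: $\chi(M)=3$ and $\sigma(M)=1$ give $c_2(M)=3$ and $c_1^2(M)=2\chi(M)+3\sigma(M)=9$. Writing $c_1(M)=kh$ where $h$ is the positive generator of $H^2(M;\mathbb{Z})$, the Wu relation $c_1\equiv w_2\pmod 2$ together with $w_2(M)\ne 0$ forces $k$ to be odd, and $k^2=9$ yields $k=\pm 3$. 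The Hirzebruch--Kodaira characterization of projective space then identifies $M$ with $\CC\PP^2$ in the case $k=+3$.

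The main obstacle, as expected, is ruling out the case $k=-3$, and this is exactly where Yau's contribution enters. If $c_1(M)=-3h$ then $K_M$ is ample, so by the Aubin--Yau theorem $M$ carries a K\"ahler--Einstein metric of negative Ricci curvature. For such a metric the Miyaoka--Yau inequality $c_1^2\le 3c_2$ applies, with equality if and only if the universal cover of $M$ is the complex hyperbolic ball. In our situation $c_1^2=9=3\cdot 3=3c_2$, so equality is attained, forcing the simply connected compact $M$ to be the noncompact complex ball---a contradiction. Hence $k=+3$ and $M\cong\CC\PP^2$, completing the proof.
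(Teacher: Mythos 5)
Your proposal is correct and follows essentially the same route as the paper: apply \Cref{homeomorphism} to get a homeomorphism with $\CC\PP^2$ and then invoke Yau's theorem that a compact complex surface homotopy equivalent (in particular, homeomorphic) to $\CC\PP^2$ is biholomorphic to it. The additional sketch of the internals of Yau's theorem (K\"ahlerness, $c_1=\pm 3h$, Hirzebruch--Kodaira, and the Miyaoka--Yau equality case) is accurate but not needed, since the paper, like you, treats that theorem as a black box.
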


\begin{proof}[Proof of \Cref{homeomorphism}]
The only zero--dimensional simply connected manifold is $\RR^0$, whose total Betti number is one, hence $\dim M>0$. Then by \Cref{cobordism}, $M$ is a $4$--manifold. Since $M$ is now assumed to be simply connected, $H_1(M;\mathbb{Z})=0$. Thus by the universal coefficient theorem, $H^2(M;\mathbb{Z})$ is free. Furthermore by Poincar\'e duality and the total Betti number three assumption, $H^2(M;\mathbb{Z})$ is of rank one. So the intersection form of $M$ is determined by its signature, which is equal to one by \Cref{cobordism} (because signature is a cobordism invariant). Our theorem then follows from Freedman's theorem \cite[Theorem 1.5]{Freedman} that the homeomorphism type of a simply connected closed smooth $4$--manifold is determined by its intersection form.
\end{proof}

\begin{proof}[Proof of \Cref{biholomorphism}]
This follows from \Cref{homeomorphism} and Yau's theorem \cite[Theorem 5]{Yau} that any compact complex surface homotopy equivalent to $\CC\PP^2$ is biholomorphic to $\CC\PP^2$.
\end{proof}

It should be noted the simply-connectedness assumption is essential to both \Cref{homeomorphism} and \Cref{biholomorphism}. There do exist non-simply-connected complex surfaces, so called fake projective planes (see e.g. \cite{fakeproj}), having the same Betti numbers as $\CC\PP^2$.

The heart of our proof of \Cref{cobordism} lies in the integrality of the signature and (especially) the Todd genus of an almost complex manifold. Another important ingredient is, on an almost complex manifold the Todd class coincides with the $\hat{A}$ class up to the exponential of half the first Chern class (see \cite[pp. 197]{Hirzebruch}):
\begin{equation}\label{a=td}
    \hat{A}=e^{c_1/2} \Td.
\end{equation}
In particular if all the Chern numbers that involves the first Chern class are zero, then the Todd genus is equal to the $\hat{A}$ genus. Our major technical tool is therefore the intimate relation between the signature and the $\hat{A}$ genus, which we will establish in \Cref{sec2}. In \Cref{sec3} we prove \Cref{cobordism} and derive \Cref{minimalbetti} as an easy corollary.

\section*{Acknowledgements}
\Cref{minimalbetti} was also independently obtained by Zhixu Su and was announced by her in a conference in 2018; her argument differs from ours (see \Cref{Su} for a sketch of her proof). The author would like to thank Su for informing him on this matter and for sharing her proof. The author also wishes to thank Dennis Sullivan and Aleksandar Milivojevi\'c for their encouragement, helpful discussions and valuable comments on early versions of this paper.

\section{Relation between the signature and $\hat{A}$ genus}\label{sec2}
Let us introduce our notation. Throughout $M$ is a nonempty closed oriented smooth manifold of dimension $\dim M=2n>0$. The total Betti number of $M$ is the sum of all Betti numbers of $M$, namely $\sum_{i\ge 0}\dim H^i(M;\QQ)$. By $p_i$ we mean the $i^\text{th}$ Pontryagin class of $M$. If $M$ is further almost complex, $c_i$ will be its $i^\text{th}$ Chern class. The symbol $\int_M$ means pairing a cohomology class with the fundamental class of $M$. The signature, Euler characteristic, $\hat{A}$ genus and Todd genus (if defined) of $M$ will be denoted by $\sigma(M)$, $\chi(M)$, $\hat{A}(M)$ and $\Td(M)$ respectively.

Since the appearance of Hirzebruch's celebrated work on multiplicative sequences and his signature theorem, $\sigma(M)$ and $\hat{A}(M)$ are known to be rational linear combinations of Pontryagin numbers. That is,

\begin{equation}\label{singnaturethm}
    \sigma(M)=\sum h_{i_1,\dots, i_r}\int_M p_{i_1}\cdots p_{i_r}, \quad\hat{A}(M)=\sum a_{i_1,\dots, i_r}\int_M p_{i_1}\cdots p_{i_r}.
\end{equation}

The coefficients of $\int_M p_m$ can be obtained by applying the Cauchy formula to the characteristic power series associated to the signature and $\hat{A}$ genus, which are $Q_\sigma(z)=\frac{\sqrt{z}}{\tanh \sqrt{z}}$ and $Q_{\hat{A}}(z)=\frac{\sqrt{z}/2}{\sinh(\sqrt{z}/2)}$ respectively (see \cite[pp. 16,19]{HBJ}, \cite[pp.9-11]{Hirzebruch}). The results are well-known (cf. \cite[pp.  12-13]{Hirzebruch}):
\begin{equation*}
    h_m=\frac{2^{2m}(2^{2m-1}-1)}{(2m)!}B_m,\quad a_m=\frac{-B_m}{2(2m)!}.
\end{equation*}
Here
\begin{equation}\label{bernoullidefn}
    B_m=\frac{(2m)!\cdot \zeta(2m)}{2^{2m-1}\pi^{2m}}
\end{equation}
is the $m^\text{th}$ nontrivial Bernoulli number without sign, $\zeta$ is the Riemann zeta function (see \cite[pp. 129-131]{HBJ}). For instance
$$B_1=\frac{1}{6}, h_1=\frac{1}{3}, a_1=-\frac{1}{24}; B_2=\frac{1}{30}, h_2=\frac{7}{45}, a_2=-\frac{1}{1440}.$$ Clearly $h_m$ and $a_m$ are related by
\begin{equation}\label{leadingcoeff}
    h_m=-2^{2m+1}(2^{2m-1}-1)a_m.
\end{equation}
This immediately proves the following well-known lemma (cf. \cite[pp. 90]{HBJ}).
\begin{lemma}\label{lem1}
Let $M$ be a manifold of dimension $4k$ with $\int_M p_k$ being the only possibly nonzero Pontryagin number. Then
\[
\sigma(M)+2^{2k+1}(2^{2k-1}-1)\hat{A}(M)=0.
\]
\end{lemma}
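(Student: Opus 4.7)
The proof is essentially a direct substitution from the material just assembled, so the plan is short. The hypothesis kills every Pontryagin number except $\int_M p_k$, so in the Hirzebruch expansions (\ref{singnaturethm}) every term $\int_M p_{i_1}\cdots p_{i_r}$ with $r\ge 2$ vanishes, as does every $\int_M p_m$ for $m\ne k$. Thus I would first observe that the sums collapse to a single term each:
\begin{equation*}
\sigma(M)=h_k\int_M p_k,\qquad \hat{A}(M)=a_k\int_M p_k,
\end{equation*}
where $h_k$ and $a_k$ are the coefficients of $p_k$ already identified via the Cauchy formula applied to $Q_\sigma$ and $Q_{\hat A}$.

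Next I would invoke the elementary arithmetic identity (\ref{leadingcoeff}), namely $h_k=-2^{2k+1}(2^{2k-1}-1)a_k$, which is purely an identity between rational numbers expressible through the Bernoulli number $B_k$ and does not use anything about $M$. Substituting gives
\begin{equation*}
\sigma(M)=-2^{2k+1}(2^{2k-1}-1)\,a_k\int_M p_k=-2^{2k+1}(2^{2k-1}-1)\,\hat{A}(M),
\end{equation*}
which is the claimed relation after moving the right-hand side to the left.

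There is no real obstacle here; the content of the lemma is already contained in the two ingredients recalled immediately before its statement. The only thing worth double-checking is that the Hirzebruch formulas for $\sigma$ and $\hat A$ indeed pick out the stated coefficient in front of the monomial $p_k$ (with no contribution from mixed monomials), but this is exactly the Cauchy-formula computation cited from \cite[pp.~12--13]{Hirzebruch} and \cite[pp.~16, 19]{HBJ}.
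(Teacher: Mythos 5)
Your proof is correct and matches the paper's reasoning exactly: the paper gives no separate proof, simply noting that the identity $h_m=-2^{2m+1}(2^{2m-1}-1)a_m$ "immediately proves" the lemma, which is precisely the collapse-and-substitute argument you spell out.
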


We will need a generalization of \Cref{lem1} in the following situation.
\begin{lemma}\label{lem2}
Let $M$ be a manifold of dimension $8k$ with $\int_M p_k^2$ and $\int_M p_{2k}$ being the only possibly nonzero Pontryagin numbers. Then
\[
\sigma(M)+2^{4k+1}(2^{4k-1}-1)\hat{A}(M)=2^{4k}(2^{2k}-1)^2\Big(\frac{B_k}{2(2k)!}\Big)^2 \int_M p_k^2.
\]
\end{lemma}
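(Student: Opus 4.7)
The plan is to reduce the lemma to a purely algebraic identity among the universal coefficients of Hirzebruch's $L$- and $\hat{A}$-polynomials. Under the hypothesis, writing $\sigma(M)$ and $\hat{A}(M)$ via \eqref{singnaturethm} leaves only the coefficients of $p_k^2$ and $p_{2k}$; denote the first two by $h_{k,k}$ and $a_{k,k}$. Setting $C := 2^{4k+1}(2^{4k-1}-1)$, the relation \eqref{leadingcoeff} applied at $m=2k$ gives $h_{2k}+C\,a_{2k}=0$, so the $\int_M p_{2k}$ contributions cancel in $\sigma(M)+C\,\hat{A}(M)$. The lemma therefore becomes the algebraic identity
\[
h_{k,k} + C\,a_{k,k} \;=\; 2^{4k}(2^{2k}-1)^2\Big(\frac{B_k}{2(2k)!}\Big)^2.
\]

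The main task is to compute the mixed coefficients $h_{k,k}$ and $a_{k,k}$. My plan is to establish a universal identity for any multiplicative sequence with characteristic series $Q(z)=1+q_1 z+q_2 z^2+\cdots$, namely
\[
q_{k,k} \;=\; \frac{q_k^2 - q_{2k}}{2},
\]
where $q_{k,k}$ denotes the coefficient of $p_k^2$ in the degree-$2k$ component. I would derive this from the Whitney product formula applied to a formal Whitney sum $E_1\oplus E_2$ in which each $E_i$ has $p_k(E_i)$ as its only non-zero Pontryagin class; then $p_k(E_1\oplus E_2)=p_k(E_1)+p_k(E_2)$, $p_{2k}(E_1\oplus E_2)=p_k(E_1)p_k(E_2)$, and every other Pontryagin class vanishes. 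Computing the degree-$2k$ component of the sequence on $E_1\oplus E_2$ in two ways—directly in terms of $p_k(E_1\oplus E_2)^2$ and $p_{2k}(E_1\oplus E_2)$, and multiplicatively as the degree-$2k$ term of $K(E_1)\cdot K(E_2)$—and comparing the coefficient of the mixed monomial $p_k(E_1)\,p_k(E_2)$ yields $q_k^2 = 2q_{k,k} + q_{2k}$. Applied to $L$ and $\hat{A}$ this gives $h_{k,k}=(h_k^2-h_{2k})/2$ and $a_{k,k}=(a_k^2-a_{2k})/2$.

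The rest of the argument is bookkeeping. Substituting and applying $h_{2k}+C\,a_{2k}=0$ once more collapses the left-hand side to $\tfrac12(h_k^2+C\,a_k^2)$. Using \eqref{leadingcoeff} at $m=k$ to write $h_k^2 = 2^{4k+2}(2^{2k-1}-1)^2 a_k^2$, factoring out $2^{4k+1}a_k^2$, and checking the elementary identity $2(2^{2k-1}-1)^2+(2^{4k-1}-1)=(2^{2k}-1)^2$ turns this into $2^{4k}(2^{2k}-1)^2 a_k^2$. Combined with $a_k = -B_k/(2(2k)!)$ from the formulas displayed just above \eqref{leadingcoeff}, this is exactly the claimed right-hand side. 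The only non-routine step in the whole plan is the universal identity $q_{k,k}=(q_k^2-q_{2k})/2$; once it is in hand the rest is arithmetic with Bernoulli numbers and powers of two.
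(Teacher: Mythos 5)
Your proposal is correct and follows essentially the same route as the paper: cancel the $\int_M p_{2k}$ terms via \eqref{leadingcoeff} at $m=2k$, reduce the $p_k^2$ coefficient to $\tfrac12(h_k^2+C\,a_k^2)$ using $h_{k,k}=\tfrac12(h_k^2-h_{2k})$ and $a_{k,k}=\tfrac12(a_k^2-a_{2k})$, and finish with the identity $2(2^{2k-1}-1)^2+(2^{4k-1}-1)=(2^{2k}-1)^2$. The only difference is that where the paper cites Hirzebruch's Lemma 1.4.1 for the mixed-coefficient formula, you give a correct self-contained derivation via multiplicativity on a formal Whitney sum.
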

\begin{proof}
From \Cref{singnaturethm} we have
\[
\sigma(M)=h_{2k}\int_M p_{2k}+h_{k,k}\int_M p_k^2,\quad\hat{A}(M)=a_{2k}\int_M p_{2k}+a_{k,k}\int_M p_k^2.
\]
By \cite[Lemma 1.4.1]{Hirzebruch} $h_{k,k}$ and $a_{k,k}$ can be expressed as
\begin{equation}\label{middlecoeff}
    h_{k,k}=\frac{1}{2}h_k^2-\frac{1}{2}h_{2k},\quad a_{k,k}=\frac{1}{2}a_k^2-\frac{1}{2}a_{2k}.
\end{equation}
Therefore we get
\begin{align*}
    \sigma(M)&+2^{4k+1}(2^{4k-1}-1)\hat{A}(M)\\
    &=(h_{2k}+2^{4k+1}(2^{4k-1}-1)a_{2k})\int_M p_{2k}+(h_{k,k}+2^{4k+1}(2^{4k-1}-1)a_{k,k})\int_M p_k^2\\
    &=2^{4k}(2^{2k}-1)^2 a_k^2\int_M p_k^2\\
    &=2^{4k}(2^{2k}-1)^2\Big(\frac{B_k}{2(2k)!}\Big)^2 \int_M p_k^2.
\end{align*}
Here the second to last equality follows from \Cref{leadingcoeff} and \Cref{middlecoeff}.
\end{proof}

Next we turn to almost complex manifolds.
\begin{lemma}\label{lem3}
Let $M$ be an almost complex manifold of dimension $8k$ with $\int_M c_{2k}^2$ and $\int_M c_{4k}$ being the only possibly nonzero Chern numbers. Then
\[
2^{4k+1}\Big[(2^{4k-1}-1)(1-r_k)+(3-2^{2k+1})\Big]\cdot \Td(M)=(1+r_k)\cdot \sigma(M)-2^{4k+1}(2^{2k}-1)^2 \frac{B_{2k}}{(4k)!}\cdot \chi(M),
\]
where $r_k$ is defined by $\binom{4k}{2k} B_k^2\cdot r_k=B_{2k}$.
\end{lemma}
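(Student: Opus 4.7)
My plan is to reduce the statement to \Cref{lem2}. I would first verify that $\hat{A}(M)=\Td(M)$ and that only $\int_M p_k^2$ and $\int_M p_{2k}$ among Pontryagin numbers can be nonzero, then express both $\sigma(M)+2^{4k+1}(2^{4k-1}-1)\Td(M)$ and $\Td(M)$ as explicit linear combinations of $\int_M c_{2k}^2$ and $\chi(M)$, and finally eliminate $\int_M c_{2k}^2$ and match coefficients.

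Every top-degree Chern monomial containing $c_1$ has at least two Chern factors and is neither $c_{2k}^2$ nor $c_{4k}$, so it integrates to zero under the hypothesis; thus \eqref{a=td} collapses to $\hat{A}(M)=\Td(M)$. To reduce the Pontryagin numbers, I would use the standard expansion
\[
p_j = c_j^2 - 2c_{j-1}c_{j+1} + \cdots + (-1)^j 2c_{2j},
\]
in which each Chern monomial has length $1$ (the lone term $(-1)^j 2c_{2j}$) or $2$. For a partition $(j_1,\ldots,j_r)$ of $2k$, the product $p_{j_1}\cdots p_{j_r}$ then expands into Chern monomials of length between $r$ and $2r$: when $r\ge 3$ everything has length $\ge 3$ and pairs trivially; when $r=2$ with $(j_1,j_2)\ne(k,k)$ the unique length-two contribution is $4c_{2j_1}c_{2j_2}$, which also pairs to zero. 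This leaves
\[
\int_M p_k^2 = 4\int_M c_{2k}^2, \qquad \int_M p_{2k} = \int_M c_{2k}^2 + 2\chi(M).
\]

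Substituting $\hat{A}(M)=\Td(M)$ and the first of these into \Cref{lem2} gives
\[
\sigma(M) + 2^{4k+1}(2^{4k-1}-1)\Td(M) = \frac{2^{4k}(2^{2k}-1)^2 B_k^2}{((2k)!)^2}\int_M c_{2k}^2.
\]
For $\Td(M)$, I would separately expand $\hat{A}(M) = a_{2k}\int_M p_{2k} + a_{k,k}\int_M p_k^2$ via \eqref{singnaturethm}, apply \eqref{middlecoeff} with the explicit value $a_m=-B_m/(2(2m)!)$, use the two Pontryagin--Chern translations above, and simplify using $B_{2k}=\binom{4k}{2k}B_k^2 r_k$. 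This yields
\[
\Td(M) = \frac{B_k^2(1+r_k)}{2((2k)!)^2}\int_M c_{2k}^2 - \frac{B_{2k}}{(4k)!}\chi(M).
\]

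Solving the last display for $\int_M c_{2k}^2$, substituting into the previous one, and multiplying through by $1+r_k$ reduces the claim to the elementary identity
\[
(2^{2k}-1)^2 - (1+r_k)(2^{4k-1}-1) = (2^{4k-1}-1)(1-r_k) + (3-2^{2k+1}),
\]
which is immediate. The only real obstacle is bookkeeping the many Bernoulli and binomial factors cleanly; the conceptual ingredients are just the hypothesis, \eqref{a=td}, \Cref{lem2}, and \eqref{middlecoeff}.
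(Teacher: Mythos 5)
Your proposal is correct and takes essentially the same route as the paper's proof: both reduce to \Cref{lem2} via $\hat{A}(M)=\Td(M)$ and the translations $\int_M p_k^2=4\int_M c_{2k}^2$, $\int_M p_{2k}=\int_M c_{2k}^2+2\chi(M)$, then eliminate $\int_M c_{2k}^2$ between the two resulting identities. The only (harmless) difference is that you carry explicit Bernoulli fractions where the paper keeps the symbols $a_k, a_{2k}, a_{k,k}$ and invokes $\frac{a_{2k}+4a_{k,k}}{2a_k^2}=1+r_k$; your final coefficient identity checks out.
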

\begin{proof}
Since $M$ is assumed to be almost complex, its top Chern class coincides with its Euler class, therefore $\int_M c_{4k}=\chi(M)$. Also its Pontryagin numbers can be recovered from its Chern numbers. The only possibly nonzero Pontryagin numbers are:
\begin{equation*}
    \int_M p_k^2= \int_M [(-1)^k 2c_{2k}]^2=4\int_M c_{2k}^2, \quad \int_M p_{2k}=\int_M c_{2k}^2+2\int_M c_{4k}.
\end{equation*}
Notice all the Chern numbers involving $c_1$ vanish, so by \Cref{a=td} we have $\hat{A}(M)=\Td(M)$.

Therefore from the proof of \Cref{lem2} we have
\begin{equation}\label{sign&ahat}
\sigma(M)+2^{4k+1}(2^{4k-1}-1)\Td(M)=2^{4k+2}(2^{2k}-1)^2 a_k^2 \int_M c_{2k}^2.
\end{equation}
On the other hand, we also have
\begin{equation}\label{toddinchern}
    \Td(M)=\hat{A}(M)=a_{2k}\int_M p_{2k}+a_{k,k}\int_M p_k^2=(a_{2k}+4a_{k,k})\int_M c_{2k}^2+2a_{2k}\chi(M).
\end{equation}
Combining \Cref{sign&ahat} and \Cref{toddinchern} to eliminate $\int_M c_{2k}^2$, we get
\begin{equation}\label{td-sign-ahat}
    2^{4k+2}(2^{2k}-1)^2 a_k^2[\Td(M)-2a_{2k}\chi(M)]=(a_{2k}+4a_{k,k})[\sigma(M)+2^{4k+1}(2^{4k-1}-1)\Td(M)].
\end{equation}
We note $$\frac{a_{2k}+4a_{k,k}}{2a_k^2}=1+r_k.$$ Then the desired identity is a straightforward consequence of \Cref{td-sign-ahat} by dividing by $2a_k^2$ and reorganizing the terms.
\end{proof}

We end this section by estimating the sizes and the relative sizes of the coefficients appearing in \Cref{lem3}. For simplicity, let us denote
\[
C_{\Td}=2^{4k+1}[(2^{4k-1}-1)(1-r_k)+(3-2^{2k+1})],\quad C_\sigma=1+r_k,\quad C_{\chi}=2^{4k+1}(2^{2k}-1)^2 \frac{B_{2k}}{(4k)!}.
\]
Then the conclusion of \Cref{lem3} reads as
\[
C_{\Td}\cdot \Td(M)=C_\sigma\cdot \sigma(M)-C_{\chi}\cdot \chi(M).
\]
\begin{lemma}\label{estimation}
We have the estimates
\[
1<C_\sigma<\frac{3}{2},\quad 0<C_\chi<8(\frac{2}{\pi})^{4k},\quad C_{\Td}\ge \frac{3}{5}\cdot 2^{8k-3}.
\]
\end{lemma}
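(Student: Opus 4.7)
The plan is to first rewrite $r_k$ in closed form using (\ref{bernoullidefn}), and then handle the three constants one at a time. Substituting $B_m=(2m)!\zeta(2m)/(2^{2m-1}\pi^{2m})$ into $\binom{4k}{2k}B_k^2\cdot r_k=B_{2k}$ collapses the powers of $\pi$ and the factorials neatly and yields the clean identity
\[
r_k=\frac{\zeta(4k)}{2\zeta(2k)^2}.
\]
Since $\zeta$ is strictly decreasing on $(1,\infty)$ with $\zeta(s)>1$ for $s>1$, one has $\zeta(2k)^2>\zeta(2k)\cdot\zeta(4k)>\zeta(4k)$, hence $0<r_k<1/2$. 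This immediately gives $1<C_\sigma<3/2$.

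For $C_\chi$, I would plug the formula for $B_{2k}$ into its definition to get
\[
C_\chi=\frac{4(2^{2k}-1)^2\zeta(4k)}{\pi^{4k}}.
\]
Positivity is clear. The upper bound follows from $(2^{2k}-1)^2<2^{4k}$ together with the crude estimate $\zeta(4k)\le\zeta(4)=\pi^4/90<2$ (valid since $\zeta$ is decreasing and $\pi^4<180$).

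The main obstacle is $C_{\Td}$, because the asserted inequality $C_{\Td}\ge(3/5)\cdot 2^{8k-3}$ is in fact sharp. Direct computation at $k=1$ gives $r_1=\zeta(4)/(2\zeta(2)^2)=(\pi^4/90)\cdot(36/(2\pi^4))=1/5$, and then
\[
C_{\Td}=2^{5}\bigl[(2^{3}-1)\cdot\tfrac{4}{5}+(3-2^{3})\bigr]=32\cdot\tfrac{3}{5}=\tfrac{3}{5}\cdot 2^{5},
\]
so equality holds. So the proof has to split into this base case together with a gap estimate for $k\ge 2$. For $k\ge 2$, using only $1-r_k>1/2$ and dividing the desired inequality by $2^{4k+1}$, the claim reduces to
\[
2^{4k-2}-2^{2k+1}+\tfrac{5}{2}\;\ge\;\tfrac{3}{5}\cdot 2^{4k-4},
\]
which rearranges to $\tfrac{17}{5}\cdot 2^{4k-4}\ge 2^{2k+1}-\tfrac{5}{2}$. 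It suffices to check the stronger inequality $\tfrac{17}{5}\cdot 2^{4k-4}\ge 2^{2k+1}$, i.e.\ $2^{2k-5}\ge 5/17$, which already holds at $k=2$ (giving $1/2>5/17$) and is monotone thereafter. Combining the $k=1$ equality with this bound for $k\ge 2$ completes the estimate.
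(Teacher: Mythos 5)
Your proof is correct and follows essentially the same route as the paper's: both compute $r_k=\zeta(4k)/(2\zeta(2k)^2)$ to get $0<r_k<1/2$, rewrite $C_\chi$ in terms of $\zeta(4k)/\pi^{4k}$ and bound $\zeta(4k)$ by a constant less than $2$, and handle $C_{\Td}$ by a direct check of the sharp case $k=1$ plus a crude estimate using $1-r_k>1/2$ for $k\ge 2$. The only differences are cosmetic (e.g.\ you invoke monotonicity of $\zeta$ where the paper multiplies out $\zeta(2k)^2$ as a double series).
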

\begin{proof}
By \Cref{bernoullidefn} and the definition of $r_k$, we see $r_k=\frac{1}{2}\frac{\zeta(4k)}{\zeta(2k)^2}$. Since \[\zeta(2k)^2=\Big(\sum_{m\ge 1}\frac{1}{m^{2k}}\Big)\Big(\sum_{m\ge 1}\frac{1}{m^{2k}}\Big)>\sum_{m\ge 1}\frac{1}{m^{4k}}=\zeta(4k),\] we have $$1<C_\sigma=1+r_k<\frac{3}{2}.$$ 
By \Cref{bernoullidefn} and $\zeta(4k)<\zeta(2)=\pi^2/6<2$ we get
\[
0<C_{\chi}=4\Big(\frac{2^{2k}-1}{\pi^{2k}}\Big)^2\zeta(4k)<8(\frac{2}{\pi})^{4k}.
\]
Finally, for $k\ge 2$
\[
C_{\Td}>2^{4k+1}[\frac{1}{2}(2^{4k-1}-1)+(3-2^{2k+1})]>2^{4k+1}[2^{4k-2}-2^{2k+1}]\ge 2^{8k-2}>\frac{3}{5}\cdot 2^{8k-3}.
\]
As for $k=1$, one can directly verify $r_1=\frac{1}{5}$ and $C_{\Td}=\frac{3}{5}\cdot 2^5$.

\end{proof}

\begin{corollary}\label{cor1}
We have the further estimates
\[
\frac{C_{\Td}}{C_\sigma+C_\chi}>4^k,\quad\frac{C_\sigma}{C_\chi}\ge 3^{k}.
\]
\end{corollary}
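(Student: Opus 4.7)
The plan is to read both inequalities off the estimates in \Cref{estimation}, supplementing with a direct numerical verification where those bounds turn out to be too loose.

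For the first inequality, observe that $(2/\pi)^4 = 16/\pi^4 < 1/6$, so the bound $C_\chi < 8(2/\pi)^{4k}$ from \Cref{estimation} gives $C_\chi < 4/3$ uniformly in $k\ge 1$. Combined with $C_\sigma < 3/2$, this yields $C_\sigma + C_\chi < 3$. Using $C_{\Td} \ge (3/5)\cdot 2^{8k-3}$ then gives
\[
\frac{C_{\Td}}{C_\sigma + C_\chi} > \frac{(3/5)\cdot 2^{8k-3}}{3} = \frac{2^{8k-3}}{5} = \frac{2^{6k-3}}{5}\cdot 4^k \ge \frac{8}{5}\cdot 4^k > 4^k,
\]
since $2^{6k-3}\ge 8$ when $k\ge 1$.

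For the second inequality, combining $C_\sigma > 1$ with $C_\chi < 8(2/\pi)^{4k}$ yields $C_\sigma/C_\chi > (\pi^4/16)^k/8$. This exceeds $3^k$ exactly when $(\pi^4/48)^k \ge 8$; since $\pi^4/48 > 2$ and $2^3 = 8$, the bound holds for all $k\ge 3$. The cases $k=1,2$ have to be checked by hand, using $\zeta(2)=\pi^2/6$, $\zeta(4)=\pi^4/90$, $\zeta(8)=\pi^8/9450$: one finds $r_1 = 1/5$, whence $(C_\sigma,C_\chi)=(6/5,2/5)$ and $C_\sigma/C_\chi = 3 = 3^1$ (equality), and $r_2 = 3/7$, whence $(C_\sigma,C_\chi)=(10/7,2/21)$ and $C_\sigma/C_\chi = 15 > 9 = 3^2$.

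The only real obstacle is that the upper bound for $C_\chi$ in \Cref{estimation} is not tight enough at small $k$ to yield the second inequality directly. This is unavoidable since the $k=1$ case saturates $C_\sigma/C_\chi\ge 3^k$ with equality, so any uniform asymptotic argument must be supplemented by a finite explicit check at small $k$; the remainder is routine arithmetic.
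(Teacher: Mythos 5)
Your proof is correct and follows essentially the same route as the paper: derive both inequalities from the bounds in \Cref{estimation}, with the general argument for the second inequality kicking in only at $k\ge 3$ and the cases $k=1,2$ verified by direct computation (your values $r_1=1/5$, $r_2=3/7$ and the resulting ratios $3$ and $15$ match the paper's). The only differences are cosmetic rearrangements of the arithmetic.
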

\begin{proof}
By \Cref{estimation}
\[
\frac{C_{\Td}}{C_\sigma+C_\chi}>\frac{3\cdot 2^{8k-3}}{15/2+40(2/\pi)^{4k}}\ge\frac{3\cdot 2^{8k-3}}{15/2+40(2/\pi)^{4}}>\frac{2^{8k-3}}{5}>4^k,
\]
and
\[
\frac{C_\sigma}{C_\chi}>\frac{1}{8}(\frac{\pi}{2})^{4k}\ge 3^k \quad\text{for }k\ge 3.
\]
For $k=1,2$ one can directly verify $\frac{C_\sigma}{C_\chi}=3,15$ respectively, so the desired estimate holds.
\end{proof}

The estimates in \Cref{estimation} and \Cref{cor1} are not sharp. Nevertheless they will be sufficient for our applications.

\section{Proof of main theorems}\label{sec3}
In this section we prove \Cref{minimalbetti} and \Cref{cobordism}. Recall the following theorem of Milnor (cf. \cite{Milnor}).
\begin{theorem}[Milnor]\label{milnor}
Two stably almost complex manifolds are complex cobordant if and only if they have the same Chern numbers.
\end{theorem}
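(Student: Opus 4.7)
The plan is to prove both directions of the equivalence. The ``only if'' direction is elementary: if $W$ is a compact stably almost complex manifold with boundary $M_1 \sqcup (-M_2)$, then the stable Chern classes of $W$ restrict to those of the $M_i$, and for any top-dimensional monomial $c_{i_1}\cdots c_{i_r}$ one has $\int_{M_1} c_{i_1}\cdots c_{i_r} - \int_{M_2} c_{i_1}\cdots c_{i_r} = \int_{\partial W} c_{i_1}\cdots c_{i_r} = 0$ by Stokes' theorem, so Chern numbers are complex cobordism invariants.

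For the substantive ``if'' direction, I would study the complex cobordism ring $\Omega^U_*$ globally rather than try to construct a cobordism between $M_1$ and $M_2$ directly. Via the Pontryagin--Thom construction, $\Omega^U_* \cong \pi_*(MU)$ where $MU$ is the Thom spectrum of the universal stable complex bundle. The argument then rests on two structural inputs: (a) after tensoring with $\QQ$, $\Omega^U_* \otimes \QQ$ is a polynomial ring on generators in every positive even degree, represented for example by $[\CC\PP^n]$ in degree $2n$, which follows from Thom's rational computation of $\pi_*(MU) \otimes \QQ$ via the Hurewicz map and the cohomology of $BU$; and (b) $\Omega^U_*$ is torsion-free as a graded abelian group.

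Granted (a) and (b), I would consider the homomorphism $\Omega^U_{2n} \to \prod \mathbb{Z}$ sending a cobordism class to its tuple of Chern numbers. By (b), injectivity is equivalent to injectivity after $\otimes \QQ$, and by (a) the target and source then have the same finite rank, so it suffices to exhibit an ordering of the monomial basis $\{\prod_i \CC\PP^{n_i}\}_{\sum n_i = n}$ and of the Chern-number monomials against which the evaluation matrix is triangular with nonzero diagonal. Using the splitting principle on $T\CC\PP^n$, whose Chern roots are explicitly computable, this is a direct combinatorial verification.

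The main obstacle is the integral statement (b): it is not formal from the rational computation and requires an essentially integral input. This is the content of Milnor's work in \cite{Milnor}, where generators of $\Omega^U_*$ are constructed at every prime from the Milnor hypersurfaces $H_{i,j} \subset \CC\PP^i \times \CC\PP^j$ together with products of projective spaces; alternatively one can compute $\pi_*(MU)$ via the Adams spectral sequence and verify it is a polynomial ring over $\mathbb{Z}$. Once (b) is in place, the Chern-number criterion falls out of the rational triangularity argument sketched above.
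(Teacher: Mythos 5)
The paper does not prove this statement; it is quoted as a black box with a citation to Milnor, so there is no internal proof to compare against. Your outline is the standard and correct argument. The ``only if'' direction is right as stated (the stable tangent bundle of the boundary is the restriction of that of the cobordism, so stable Chern numbers vanish on boundaries). For the ``if'' direction, your reduction is sound: granting (a) that $\Omega^U_*\otimes\QQ$ is polynomial on the $[\CC\PP^n]$ and (b) that $\Omega^U_*$ is torsion-free, injectivity of the Chern-number map on $\Omega^U_{2n}$ follows from its injectivity on $\Omega^U_{2n}\otimes\QQ$, which in turn follows from nonsingularity of the evaluation matrix of Chern monomials against products of projective spaces; the classical way to see the triangularity is to pass from the monomial basis $c_{i_1}\cdots c_{i_r}$ to the $s_\omega$ basis of symmetric functions, where $s_{(n_1,\dots,n_r)}\bigl(\prod_i\CC\PP^{n_i}\bigr)=\prod_i(n_i+1)\neq 0$ up to a combinatorial factor and the matrix is triangular for a suitable order on partitions. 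You correctly identify (b) as the genuinely hard, non-formal input and defer it to Milnor (or Novikov); this is exactly what the paper's citation is standing in for. One remark: (b) together with the freeness of $H_*(MU;\mathbb{Z})$ and the Thom isomorphism $H^*(MU;\mathbb{Z})\cong\mathbb{Z}[c_1,c_2,\dots]$ packages the whole theorem as the injectivity of the Hurewicz map $\pi_*(MU)\to H_*(MU;\mathbb{Z})$, so once Milnor's computation is granted the rational triangularity argument is only needed to identify the image, not for injectivity itself. As a proof sketch modulo the cited computation, your proposal is correct.
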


\begin{proposition}\label{bettibig}
If $M$ is an almost complex manifold of dimension $8k$ with $\int_M c_{2k}^2$ and $\int_M c_{4k}$ being the only possibly nonzero Chern numbers. Then either $M$ is complex cobordant to the empty manifold or the total Betti number of $M$ is at least $3^{k}$.
\end{proposition}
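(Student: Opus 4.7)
The plan is to prove the contrapositive: if the total Betti number $b(M):=\sum_i\dim H^i(M;\QQ)$ is strictly less than $3^k$, then every Chern number of $M$ vanishes, so by Milnor's theorem (\Cref{milnor}) $M$ is complex cobordant to the empty manifold. Under our hypotheses the only potentially nonzero Chern numbers are $\int_M c_{2k}^2$ and $\int_M c_{4k}=\chi(M)$, so it suffices to show both are zero. The three basic inputs are the integrality of $\Td(M)$ (which holds because $M$ is almost complex), the elementary size bounds $|\chi(M)|\le b(M)$ and $|\sigma(M)|\le b_{4k}(M)\le b(M)$, and the numerical estimates of \Cref{cor1} already proved in \Cref{sec2}.

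First I would feed the size bounds into the identity $C_{\Td}\,\Td(M)=C_\sigma\,\sigma(M)-C_\chi\,\chi(M)$ of \Cref{lem3}. Under the assumption $b(M)<3^k$, positivity of all three coefficients (\Cref{estimation}) gives
\[
|\Td(M)|\le\frac{C_\sigma+C_\chi}{C_{\Td}}\,b(M)<\frac{3^k}{4^k}<1
\]
by the first inequality of \Cref{cor1}, and integrality then forces $\Td(M)=0$. The identity collapses to $C_\sigma\,\sigma(M)=C_\chi\,\chi(M)$. If $\sigma(M)\ne 0$, the second inequality of \Cref{cor1} gives $|\chi(M)|=(C_\sigma/C_\chi)\,|\sigma(M)|\ge 3^k$, contradicting $|\chi(M)|\le b(M)<3^k$. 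Hence $\sigma(M)=0$, and the same identity then forces $\chi(M)=0$ as well.

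To conclude I would substitute $\sigma(M)=\Td(M)=0$ back into the intermediate identity $\sigma(M)+2^{4k+1}(2^{4k-1}-1)\Td(M)=2^{4k+2}(2^{2k}-1)^2 a_k^2\int_M c_{2k}^2$ from the proof of \Cref{lem3}. Since $a_k$ and $2^{2k}-1$ are both nonzero, this yields $\int_M c_{2k}^2=0$, so every Chern number of $M$ vanishes and \Cref{milnor} completes the argument.

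The structure is essentially forced once \Cref{lem3} and \Cref{cor1} are in hand; the substantive work has already been done in \Cref{sec2}, in the form of the two estimates $C_{\Td}/(C_\sigma+C_\chi)>4^k$ and $C_\sigma/C_\chi\ge 3^k$. Conceptually, the proposition rests on a double integrality argument---first forcing $\Td(M)=0$, then forcing $\sigma(M)=0$---so the only real obstacle would have been arranging the constants so each step beats its threshold ($1$ and $3^k$ respectively), which is exactly what the earlier estimates are calibrated to deliver.
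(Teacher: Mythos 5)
Your proposal is correct and is essentially the paper's own argument in contrapositive form: the paper runs the same three-way case analysis on $\Td(M)$ and $\sigma(M)$ forward, using exactly \Cref{lem3}, \Cref{cor1}, the intermediate identity \Cref{sign&ahat}, and \Cref{milnor} as you do. No substantive difference.
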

\begin{proof}
If $\Td(M)\neq 0$, then since $\Td(M)$ is an integer, by \Cref{lem3} and \Cref{cor1} we get
\[
\text{Total Betti number}\ge \max\{|\sigma(M)|,|\chi(M)|\}>\frac{C_{\Td}}{C_\sigma+C_\chi}|\Td(M)|> 4^{k}.
\]
If $\Td(M)=0$ but $\sigma(M)\neq 0$, then again by \Cref{lem3} and \Cref{cor1} we have
\[
\text{Total Betti number}\ge|\chi(M)|=\frac{C_\sigma}{C_\chi}|\sigma(M)|\ge 3^{k}.
\]
If $\Td(M)=\sigma(M)=0$, then from \Cref{sign&ahat} and \Cref{toddinchern} we have $\int_M c_{2k}^2=0$ and $\int_M c_{4k}=\chi(M)=0$. So all the Chern numbers of $M$ are zero, hence $M$ is complex cobordant to the empty manifold by \Cref{milnor}. 
\end{proof}

We are ready to prove our main theorems.
\cobordism*

\begin{proof}
By \cite[Theorem 2.2, Theorem 3.3, Remark 4.2]{AM19} $\dim M$ is either $4$ or divisible by $2^{11}$. We must show the latter cannot happen. Assume otherwise $\dim M=8k$ for some $k\ge 2$. Since the total Betti number of $M$ is assumed to be three, by Poincar\'e duality the only non-trivial rational cohomology groups are $H^0$, $H^{4k}$ and $H^{8k}$, each of which is one-dimensional. Therefore the only possibly nonzero Chern numbers are $\int_M c_{2k}^2$ and $\int_M c_{4k}$. Then by \Cref{bettibig}, either $M$ is complex cobordant to the empty manifold which contradicts with $\int_M c_{4k}=\chi(M)=3\neq 0$ (if it is complex cobordant to the empty manifold then all its Chern numbers are zero by \Cref{milnor}), or the total Betti number is at least $3^{k}\ge 9$ which contradicts the total Betti number three assumption. This completes the proof of our first assertion.

To prove the second assertion, notice now $\int_M c_2=\chi(M)=3$ and
\[
\sigma(M)=\int_M\frac{p_1}{3}=\int_M\frac{1}{3}(c_1^2-2c_2)=\frac{1}{3}\int_M c_1^2-2.
\]
Meanwhile (cf. \cite[pp. 14]{Hirzebruch}),
\[
\Td(M)=\int_M\frac{1}{12}(c_1^2+c_2)=\frac{1}{12}\int_M c_1^2+\frac{1}{4}.
\]
It follows that $\sigma(M)+3=4\cdot \Td(M)$, so $\sigma(M)\equiv 1$ modulo $4$. Similar as before, by Poincar\'e duality and the total Betti number three assumption, the middle cohomology is one-dimensional. So $\sigma(M)$ must be one. This in turn implies $\int_M c_1^2=9$. We observe the Chern numbers of $M$ are the same as those of $\CC\PP^2$. So by \Cref{milnor} we conclude $M$ is complex cobordant to $\CC\PP^2$.
\end{proof}

\begin{remark}\label{Su}
We here communicate another proof of that $8k$-manifolds with total Betti number three cannot admit almost complex structures, independently obtained by Zhixu Su\footnote{Our proof above was attained in summer 2019; later the author learned from Su that she had also proved this and announced it in a conference in 2018.}. Briefly, in such case the Chern classes $c_1,\dots,c_{2k-1}$ are torsion, then from Hattori-Stong relations one can show $\int_{M} c_{2k}^2$ is divisible by $[(2k-1)!]^2$. This extra divisibility combined with a careful $2$-adic examination of the signature equation gives the result.
\end{remark}

\minimalbetti*

\begin{proof}
This follows from \Cref{cobordism} and \cite[Theorem 2.2]{AM19} that almost complex manifolds with total Betti number two exist only in dimension $2$ and $6$.
\end{proof}

\bibliographystyle{plain}
\bibliography{ref}

\end{document}